\newtheorem{lemma}{Lemma}
\DeclareMathOperator*{\diag}{diag}
\newcommand{\bbm}{\begin{bmatrix}}
\newcommand{\ebm}{\end{bmatrix}}
\newcommand{\norm}[2]{\left|\left| #1 \right|\right|_{#2}}
\newcommand{\uniform}[1]{\mathcal{U}\left(#1\right)}
\newcommand{\unitBall}[1]{\zeta_{#1}}
\newcommand{\unitNBall}[0]{\unitBall{n}}
\newcommand{\xFirstFoci}[0]{\statex_{\rm f1}}    
\newcommand{\xSecondFoci}[0]{\statex_{\rm f2}}    
\newcommand{\statex}[0]{\mathbf{x}}
\newcommand{\namedState}[1]{\statex_{#1}}
\newcommand{\stateSet}[0]{X}
\newcommand{\namedSet}[1]{\stateSet_{#1}}
\newcommand{\namedPdf}[2]{p_{#1}\left(#2\right)}
\newcommand{\xball}[0]{\namedState{\rm ball}}
\newcommand{\xcentre}[0]{\namedState{\rm centre}}
\newcommand{\xellipse}[0]{\namedState{\rm ellipse}}
\newcommand{\ballSet}[0]{\namedSet{\rm ball}}
\newcommand{\ellipseSet}[0]{\namedSet{\rm ellipse}}
\newcommand{\ballPdf}[1]{\namedPdf{\rm ball}{#1}}
\newcommand{\ellipsePdf}[1]{\namedPdf{\rm ellipse}{#1}}
\newcommand{\UTIASprogram}{N/A}
\newcommand{\UTIAStitle}{The Probability Density Function of a Transformation-based Hyperellipsoid Sampling Technique}
\newcommand{\UTIASdocument}{TR-2014-JDG004}
\newcommand{\UTIASrevision}{Rev: 1.2}
\newcommand{\UTIASauthor}{J.\ D.\ Gammell}
\title{\sf\bfseries \UTIAStitle }
\author{Jonathan D.\ Gammell\\
       Institute for Aerospace Studies \\
       University of Toronto\\
       4925 Dufferin Street \\
       Toronto, Ontario \\
       Canada M3H 5T6 \\
       \texttt{<jon.gammell@utoronto.ca>}
       \and
       Timothy D.\ Barfoot \\
       Institute for Aerospace Studies \\
       University of Toronto\\
       4925 Dufferin Street \\
       Toronto, Ontario \\
       Canada M3H 5T6 \\
       \texttt{<tim.barfoot@utoronto.ca>}}
\date{}
\begin{document}

\fancypagestyle{plain}{%
    \fancyhf{}%
    \fancyfoot[C]{}%
    \fancyhead[R]{\begin{tabular}[b]{r}\small\sf \UTIASdocument\\
         \small\sf\UTIASrevision\\
         \small\sf\today \end{tabular}}%
    \renewcommand{\headrulewidth}{0pt}
    \renewcommand{\footrulewidth}{0pt}%
}

\pagestyle{fancy}
\allsectionsfont{\sf\bfseries}
\renewcommand{\captionlabelfont}{\sf\bfseries}
\reversemarginpar
\renewcommand{\labelitemi}{--}
\renewcommand{\labelitemii}{--}
\renewcommand{\labelitemiii}{--}


\rhead{ \begin{tabular}[b]{r}\small\sf \UTIASdocument\\
        \small\sf\UTIASrevision\\
        \small\sf\today \end{tabular}}
\chead{}
\lfoot{}
\cfoot{\thepage}
\rfoot{}
\renewcommand{\headrulewidth}{0pt}
\renewcommand{\footrulewidth}{0pt}


\maketitle

\begin{abstract}
Sun and Farooq \cite{sun_fusion02} showed that random samples can be efficiently drawn from an arbitrary $n$-dimensional hyperellipsoid by transforming samples drawn randomly from the unit $n$-ball. They stated that it was a \textit{straightforward} to show that, given a uniform distribution over the $n$-ball, the transformation results in a uniform distribution over the hyperellipsoid, but did not present a full proof. This technical note presents such a proof.
\end{abstract}

\section{Transformation-based Sampling of Hyperellipsoids}\label{sec:sample}
Let $\ellipseSet$ be the set of points within an $n$-dimensional hyperellipsoid such that
\begin{align*}
    \ellipseSet = \left\lbrace \statex \in \mathbb{R}^n \;\; \middle| \;\;  \left( \statex - \xcentre \right)^T\mathbf{S}^{-1}\left( \statex - \xcentre \right) \leq 1 \right\rbrace,
\end{align*}
where $\mathbf{S} \in \mathbb{R}^{n \times n}$ is the hyperellipsoid matrix, and $\xcentre = \left(\xFirstFoci + \xSecondFoci\right)/2$ is the centre of the hyperellipsoid in terms of its two focal points, $\xFirstFoci$ and $\xSecondFoci$.
We can then transform points from the unit $n$-ball, $\xball \in \ballSet$, to points in the hyperellipsoid, $\xellipse \in \ellipseSet$, by a linear invertible transformation as,
\begin{align} \label{eqn:transform}
    \xellipse = \mathbf{L} \xball + \xcentre.
\end{align}
The transformation $\mathbf{L}$ is given by the Cholesky decomposition of the hyperellipsoid matrix,
\begin{align*}
    \mathbf{L}\mathbf{L}^T \equiv \mathbf{S},
\end{align*}
and the unit $n$-ball is defined in terms of the Euclidean norm, $\norm{\cdot}{2}$, by
\begin{align*}
    \ballSet = \left\lbrace \statex \in \mathbb{R}^n \;\; \middle| \;\;  \norm{\statex}{2} \leq 1 \right\rbrace.
\end{align*}

\section{Resulting Probability Density Function}
In response to concerns expressed by Li \cite{li_cntrlapp92} that sampling the hyperellipsoid by transforming uniformly-drawn samples from the unit $n$-ball, $\xball \sim \uniform{\ballSet}$, by \eqref{eqn:transform} would not result in a uniform distribution, Sun and Farooq \cite{sun_fusion02} stated the following Lemma and Proof.
\begin{lemma}
If the random points distributed in a hyper-ellipsoid are generated from the random points uniformly distributed in a hyper-sphere through a linear invertible non-orthogonal transformation, then the random points distributed in the hyper-ellipsoid are also uniformly distributed.
\end{lemma}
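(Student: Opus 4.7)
The plan is to use the standard change-of-variables formula for probability density functions, exploiting the fact that \eqref{eqn:transform} is affine and invertible. Since $\mathbf{S}$ is (implicitly) positive definite, its Cholesky factor $\mathbf{L}$ exists, is lower triangular, and is nonsingular, so the map $\xball \mapsto \xellipse = \mathbf{L}\xball + \xcentre$ is a $C^1$ diffeomorphism from $\mathbb{R}^n$ onto itself. Its inverse is $\xball = \mathbf{L}^{-1}(\xellipse - \xcentre)$, with constant Jacobian $\mathbf{L}^{-1}$, so $\left|\det \mathbf{L}^{-1}\right| = 1/\left|\det \mathbf{L}\right|$ everywhere.

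First I would write down the hypothesis explicitly as $\ballPdf{\xball} = 1/V_{\rm ball}$ for $\xball \in \ballSet$ and zero otherwise, where $V_{\rm ball}$ denotes the Lebesgue volume of the unit $n$-ball. Next I would verify that the image of $\ballSet$ under \eqref{eqn:transform} is exactly $\ellipseSet$: for any $\xball$ with $\norm{\xball}{2}\leq 1$, substituting $\xball = \mathbf{L}^{-1}(\xellipse - \xcentre)$ into $\xball^T\xball \leq 1$ and using $\mathbf{L}\mathbf{L}^T \equiv \mathbf{S}$ yields $(\xellipse - \xcentre)^T\mathbf{S}(\xellipse-\xcentre)\leq 1$, which is the defining inequality of $\ellipseSet$; the reverse inclusion follows from invertibility. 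Then I would apply the change-of-variables rule to obtain
\begin{align*}
    \ellipsePdf{\xellipse} = \ballPdf{\mathbf{L}^{-1}(\xellipse-\xcentre)} \left|\det \mathbf{L}^{-1}\right| = \frac{1}{V_{\rm ball}\left|\det \mathbf{L}\right|}
\end{align*}
for $\xellipse \in \ellipseSet$, and zero otherwise. The key observation is that this density is a constant on its support, which is the definition of uniformity.

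To finish, I would confirm that the constant is $1/V_{\rm ellipse}$ by integrating the density to $1$, or equivalently by computing the volume of $\ellipseSet$ through the same substitution: $V_{\rm ellipse} = \int_{\ellipseSet} d\xellipse = \int_{\ballSet}\left|\det \mathbf{L}\right| d\xball = \left|\det \mathbf{L}\right| V_{\rm ball}$. This both normalises the density and makes the geometric scaling transparent.

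There is no genuinely hard step here --- as the authors note, it is straightforward --- but the only place one must be careful is the support argument, ensuring that $\mathbf{L}\ballSet + \xcentre = \ellipseSet$ so that the density vanishes precisely off $\ellipseSet$; without this the "uniform on the ellipsoid" conclusion is not meaningful. The non-orthogonality emphasised in the statement plays no essential role beyond guaranteeing that $\left|\det \mathbf{L}\right|$ is a well-defined nonzero scalar that simply rescales the constant density.
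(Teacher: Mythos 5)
Your proposal is correct and follows essentially the same route as the paper: the change-of-variables formula applied to the affine map with constant Jacobian $\mathbf{L}^{-1}$, yielding a constant density $\left|\det \mathbf{L}^{-1}\right|/\unitNBall$ on the hyperellipsoid. Your explicit verification that $\mathbf{L}\ballSet + \xcentre = \ellipseSet$ and the final normalisation to $1/\left(\unitNBall\left|\det\mathbf{L}\right|\right)$ are welcome details the paper only states in passing, but they do not change the argument.
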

\begin{proof}
The proof of the above lemma is very straightforward and is omitted here for brevity. The result of the lemma is further substantiated through the simulation shown in [Figures].
\end{proof}

For clarity, the full proof is presented below.
\begin{proof}
Let $\ballPdf{\cdot}$ be the probability density function of samples drawn uniformly from the unit $n$-ball of volume $\unitNBall$, such that,
\begin{align}\label{eqn:ballPdf}
    \ballPdf{\statex} :=
    \begin{cases}
        \dfrac{1}{\unitNBall},& \forall \statex \in \ballSet\\
        0,              & \text{otherwise},
    \end{cases}
\end{align}
and $g\left(\cdot\right)$ be an invertible transformation from the unit $n$-ball to a hyperellipsoid, such that,
\begin{align*}
    \xellipse &:= g\left(\xball\right),\\
    \xball &= g^{-1}\left(\xellipse\right).
\end{align*}
Then the probability density function of samples drawn from the hyperellipsoid, $\ellipsePdf{\cdot}$, is given by,
\begin{align}\label{eqn:pdfDefn}
    \ellipsePdf{\statex} := \ballPdf{g^{-1}\left(\statex\right)}\left|\det\left\lbrace \left.\frac{dg^{-1}}{d\xellipse}\right|_{\statex} \right\rbrace \right|.
\end{align}
From \eqref{eqn:transform}, we can calculate the inverse transformation as,
\begin{align*}
    g^{-1}\left(\xellipse\right) = \mathbf{L}^{-1}\left(\xellipse - \xcentre\right),
\end{align*}
whose Jacobian is then
\begin{align}\label{eqn:jac}
    \frac{dg^{-1}}{d\xellipse} = \frac{d}{d\xellipse}\mathbf{L}^{-1}\left(\xellipse - \xcentre\right) = \mathbf{L}^{-1}.
\end{align}
Substituting \eqref{eqn:jac} and \eqref{eqn:ballPdf} into \eqref{eqn:pdfDefn} gives,
\begin{align}\label{eqn:ellipsePdf}
    \ellipsePdf{\statex} :=
    \begin{cases}
        \dfrac{1}{\unitNBall}\left|\det\left\lbrace \mathbf{L}^{-1} \right\rbrace \right|,& \forall \statex \in \ellipseSet\\
         0,              & \text{otherwise},
    \end{cases}
\end{align}
where we have used the fact that $g^{-1}\left(\statex\right) \in \ballSet \implies \statex \in \ellipseSet$.
As $\ellipsePdf{\cdot}$ is constant for all $\xellipse \in \ellipseSet$, this proves that \eqref{eqn:transform} transforms samples drawn uniformly from the unit $n$-ball such that they are uniformly distributed over the hyperellipsoid given by $\mathbf{S}$.
\end{proof}

\subsection{Orthogonal Hyperellipsoids}
If the axes of hyperellipsoid are orthogonal, there is a coordinate frame aligned to the axes of the hyperellipsoid such that $\mathbf{S}$ will be diagonal,
\begin{align*}
    \mathbf{S}' = \diag\left\lbrace r_1^2, r_2^2, \ldots, r_n^2 \right\rbrace,
\end{align*}
where $r_i$ is the radius of $i$-th axis of the hyperellipsoid.
The transformation from the unit $n$-ball to the hyperellipsoid expressed in this aligned frame, $\mathbf{L}'$, will then be
\begin{align}\label{eqn:orthoL}
    \mathbf{L}' = \diag\left\lbrace r_1, r_2, \ldots, r_n \right\rbrace.
\end{align}
The hyperellipsoid in any arbitrary Cartesian frame can then be expressed as a rotation applied after this diagonal transformation,
\begin{align}\label{eqn:rotC}
    \xellipse = \mathbf{C}\mathbf{L'} \xball + \xcentre,
\end{align}
where $\mathbf{C} \in SO\left(n\right)$ is an $n$-dimensional rotation matrix.
Rearranging \eqref{eqn:rotC} and substituting into \eqref{eqn:ellipsePdf} gives
\begin{align}\label{eqn:orthoPdfTemp}
    \ellipsePdf{\statex} :=
    \begin{cases}
        \dfrac{1}{\unitNBall}\left|\det\left\lbrace \mathbf{L'}^{-1}\mathbf{C}^T \right\rbrace \right|,& \forall \statex \in \ellipseSet\\
         0,              & \text{otherwise},
    \end{cases}
\end{align}
where we have made use of the orthogonality of rotation matrices, $\forall \mathbf{C} \in SO\left(n\right), \, \mathbf{C}^T \equiv \mathbf{C}^{-1}$.
Substituting \eqref{eqn:orthoL} into \eqref{eqn:orthoPdfTemp} finally gives,
\begin{align}\label{orthoPdf}
        \ellipsePdf{\statex} :=
    \begin{cases}
        \dfrac{1}{\unitNBall\prod_{i=1}^n r_i},& \forall \statex \in \ellipseSet\\
         0,              & \text{otherwise},
    \end{cases}
\end{align}
Where we have made use of the fact that all rotation matrices have a unity determinant, $\forall \mathbf{C} \in SO\left(n\right), \, \det\left\lbrace\mathbf{C}\right\rbrace = 1,$ and that the determinant of a diagonal matrix is the product of the diagonal terms. As expected, \eqref{orthoPdf} is exactly the inverse of the volume of an $n$-dimensional hyperellipsoid with radii $\left\lbrace r_i \right\rbrace$.
\newpage
\bibliographystyle{asrl}
\bibliography{TR-2014-JDG004}

\begin{thebibliography}{2}
\newcommand{\enquote}[1]{``#1''}
\providecommand{\natexlab}[1]{#1}

\bibitem[{Li(1992)}]{li_cntrlapp92}
Li, X.~R., \enquote{Generation of random points uniformly distributed in
  hyperellipsoids,} in \emph{Proceedings of the First {IEEE} Conference on
  Control Applications}, volume~2, pages 654--658, 1992.

\bibitem[{Sun and Farooq(2002)}]{sun_fusion02}
Sun, H. and Farooq, M., \enquote{Note on the generation of random points
  uniformly distributed in hyper-ellipsoids,} in \emph{Proceedings of the Fifth
  International Conference on Information Fusion}, volume~1, pages 489--496,
  2002.

\end{thebibliography}

\end{document}